\newtheorem{theorem}{Theorem}
\newtheorem{lemma}[theorem]{Lemma}
\newtheorem*{main theorem}{Main Result}
\theoremstyle{definition}
\title[Bisimple $\mathcal{H}$-trivial f.p. congruence-free monoids]{A countable series of bisimple $\mathcal{H}$-trivial finitely presented congruence-free monoids}
\author{Victor~Maltcev}
\address{Department of Mathematics and Statistics, Sultan Qaboos University, Al-Khodh 123, Muscat, 
Sultanate of OMAN}
\email{\texttt{victor.maltcev@gmail.com}}
\keywords{Finitely presented, congruence-free, bisimple, $\mathcal{H}$-trivial}
\begin{document}

\maketitle

It is well-known a countable series of infinite finitely presented simple groups by Graham Higman~\cite{Higman}.
In view of that Boone-Higman Conjecture (see~\cite{LS} to learn about the results on finitely generated simple groups) is still an open problem, every example of finitely generated (and especially finitely presented) group and semigroup are very precious. Surprisingly, the semigroup analogue of simple groups -- congruence-free semigroups -- were studied not that extensively as that was for simple groups. The only results in this area are due to Jean-Camille Birget, we refer the reader to~\cite{Birget1}--\cite{Birget5} to learn about semigroup analogues of Higman's groups.

The author of the current note asked himself -- would it be possible to find the `really' semigroup counterpart of Higman's series, i.e. so that each of the semigroups is finitely presented, congruence-free, bisimple, but yet $\mathcal{H}$-trivial. Indeed such a series exists and the main goal of the note is to prove

\begin{main theorem}\label{th:main}
For any $n\geq 1$ the monoid $M_n$ presented by the (finite)
confluent noetherian system
\begin{eqnarray*}
a^{n+1}ba^nb &\to& 1\\
a^{n+1}ba^{n-1}b &\to& 1\\
&\cdots&\\
a^{n+1}bab &\to& 1\\
a^{n+1}b^2 &\to& b
\end{eqnarray*}
is bisimple, $\mathcal{H}$-trivial and congruence-free.
\end{main theorem}

\begin{proof}
Since $M_n$ is given by a finite complete system, it is convenient
to work with elements of $M_n$ in their normal forms. So, a typical
element of $M_n$ looks like
\begin{equation}\label{eq:form}
b^{\varepsilon}(a^{d_1}b^{k_1})\cdots(a^{d_s}b^{k_s})(a^{n_1}b)\cdots(a^{n_p}b)a^l,
\end{equation}
where $\varepsilon\geq 0$, $s\geq 0$, $p\geq 0$, $l\geq 0$; $1\leq
d_1,\ldots,d_s\leq n$; $k_1,\ldots,k_s\geq 1$; $n_1,\ldots,n_p\geq
n+1$.

Later we will need the following norm for the elements of $M_n$: if
$w\in M_n$ admits the normal form~\eqref{eq:form}, then
\begin{equation*}
\|w\|=\varepsilon+k_1+\cdots+k_s+n_1\cdots+n_p+l.
\end{equation*}

\vspace{\baselineskip}

\noindent\textbf{Bisimplicity and $\mathcal{H}$-triviality}

\vspace{\baselineskip}

It is straightforward from the presentation that $a^l\mathcal{R}1$
for all $l\geq 0$, and that $a^mb\mathcal{R}1$ for all $m\geq n+1$.
Furthermore, $b^{\varepsilon}\mathcal{L}1$ for all $\varepsilon\geq
0$, and $a^db^k\mathcal{L}1$ for all $1\leq d\leq n$ and $k\geq 1$.
Using these, by~\eqref{eq:form} we immediately see that every
element of $M_n$ is $\mathcal{D}$-equivalent to $1$ and so $M_n$ is
bisimple. One can quite easily assure oneself that $M_n$ is, in
addition, $\mathcal{H}$-trivial.

\vspace{\baselineskip}

\noindent\textbf{Congruence-freeness}

\vspace{\baselineskip}

That $M_n$ is congruence-free is equivalent to the following: for
every congruence $\rho$ on $M_n$, and $u\neq v$ in $M_n$ such that
$u\rho v$, it follows that $\rho=M_n\times M_n$.

We induct on $\|u\|+\|v\|$ to show that if $u\neq v$ in $M_n$ and
$\rho$ is a congruence on $M_n$ with $u\rho v$, then $\rho=M_n\times
M_n$. Prior to starting our induction we prove two facts we will use
frequently within the induction arguments:
\begin{lemma}\label{lm:first-lemma}
Any group homomorphic image of $M_n$ is trivial.
\end{lemma}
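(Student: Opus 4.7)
The plan is to take an arbitrary homomorphism $\phi\colon M_n \to G$ with $G$ a group, write $\bar a=\phi(a)$ and $\bar b=\phi(b)$, and exploit the cancellation available in $G$ to force $\bar a=\bar b=1$. The point is that each defining relation of $M_n$ is an equation between short words, and once one can invert in $G$, these equations collapse dramatically.

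First I would apply $\phi$ to the last relation $a^{n+1}b^2=b$: cancelling one $\bar b$ in $G$ gives $\bar a^{n+1}\bar b=1$, i.e.\ $\bar a^{n+1}=\bar b^{-1}$. Substituting this into each of the remaining relations $a^{n+1}ba^{k}b=1$ (for $1\le k\le n$) turns it into $\bar b^{-1}\bar b\bar a^{k}\bar b=\bar a^{k}\bar b=1$, i.e.\ $\bar b=\bar a^{-k}$ for every such $k$. When $n\ge 2$, comparing $k=1$ and $k=2$ gives $\bar a^{-1}=\bar a^{-2}$, hence $\bar a=1$; and when $n=1$ we instead combine $\bar b=\bar a^{-1}$ with $\bar a^{n+1}=\bar a^{2}=\bar b^{-1}=\bar a$ to conclude $\bar a=1$ directly. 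In either case $\bar b=\bar a^{-1}=1$ follows, so the image of $\phi$ is trivial.

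I do not anticipate any genuine obstacle here: the whole argument is just reading the relations inside a group. The only thing to remain careful about is that the relation $a^{n+1}b^{2}=b$ is not of the form ``word $=1$'', so one must pass through $G$ (where $\bar b$ is invertible) before cancelling; this is automatic under the standing assumption that the codomain is a group.
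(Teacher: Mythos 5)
Your proof is correct and follows essentially the same route as the paper: cancel $\bar b$ in $\bar a^{n+1}\bar b^{2}=\bar b$ to get $\bar a^{n+1}\bar b=1$, feed this back into the relations $a^{n+1}ba^{k}b=1$ to get $\bar a^{k}\bar b=1$, and compare two such equations to force $\bar a=\bar b=1$. The only difference is cosmetic: the paper compares $\bar a^{n+1}\bar b=1$ with $\bar a^{n}\bar b=1$ (the $k=n$ relation), which makes your separate treatment of $n=1$ versus $n\geq 2$ unnecessary.
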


\begin{proof}
Let $G$ be the group presented by the initial presentation for
$M_n$. Then $a^{n+1}b=1$ and so $a^nb=1$ in $G$. This yields $a=1$
and $b=1$ in $G$.
\end{proof}

\begin{lemma}\label{lm:second-lemma}
Let $\rho$ be a congruence on $M_n$ such that $a^db\mathcal{R}1$ in
$M_n/\rho$ for some $1\leq d\leq n$. Then $\rho=M_n\times M_n$.
\end{lemma}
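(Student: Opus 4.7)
The plan is to show that $M_n/\rho$ is a group, at which point Lemma~\ref{lm:first-lemma} forces $M_n/\rho$ to be trivial and hence $\rho = M_n\times M_n$. Unpacking the hypothesis, $a^db\,\mathcal{R}\,1$ in $M_n/\rho$ yields some $w\in M_n$ with $[a^db][w]=[1]$. The crucial observation is that, since $1\le d\le n$, the defining relation $a^{n+1}ba^db\to 1$ of $M_n$ gives $[a^{n+1}b][a^db]=[1]$ outright. Thus $[a^db]$ already has a left inverse $[a^{n+1}b]$, and now also a right inverse $[w]$; the standard monoid trick (if $xy=1=zx$ then $z=zxy=y$) shows these coincide, so $[a^db]$ is a two-sided unit with inverse $[a^{n+1}b]$.

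Next I would promote this to the statement that $[a]$ itself is a unit in $M_n/\rho$. Since $a^{n+1-d}\cdot a^db=a^{n+1}b$ holds in $M_n$, multiplying the corresponding equation in $M_n/\rho$ on the right by $[a^db]^{-1}=[a^{n+1}b]$ gives
\[
[a]^{n+1-d}=[a^{n+1-d}]=[a^{n+1}b]^2,
\]
which is a product of two units and hence a unit. For any monoid, if some positive power $x^k$ is a unit with inverse $u$, then $x^{k-1}u$ and $ux^{k-1}$ are, respectively, a right and a left inverse of $x$, so $x$ itself is a unit. Applying this with $x=[a]$ and $k=n+1-d\ge 1$, we conclude that $[a]$ is a unit. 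Consequently $[b]=[a]^{-d}\cdot[a^db]$ is also a product of units, so $[b]$ is a unit too.

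Since $M_n/\rho$ is generated as a monoid by $[a]$ and $[b]$, and both generators are units, every element of $M_n/\rho$ is a unit; that is, $M_n/\rho$ is a group. Lemma~\ref{lm:first-lemma} then forces $M_n/\rho$ to be trivial, which is exactly $\rho = M_n\times M_n$.

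The only mildly delicate step I anticipate is locating a pure power of $a$ inside the group of units of $M_n/\rho$: it is here that the precise shape of the defining rules is used, via the monoidal factorisation $a^{n+1-d}\cdot a^db=a^{n+1}b$ combined with the freshly obtained equality $[a^db]^{-1}=[a^{n+1}b]$. Everything else is formal bookkeeping with units and one invocation of Lemma~\ref{lm:first-lemma}.
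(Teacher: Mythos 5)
Your proof is correct and follows essentially the same route as the paper: combine the right invertibility of $[a^db]$ (from $a^db\,\mathcal{R}\,1$) with the left inverse $[a^{n+1}b]$ coming from the defining relation to place $a^db$, then $a^{n+1}b$, then $a^{n+1-d}$, and hence $a$ and $b$, in the group of units, so that $M_n/\rho$ is a group and Lemma~\ref{lm:first-lemma} applies. Your write-up merely makes explicit the standard unit-propagation steps that the paper leaves implicit.
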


\begin{proof}
Since $a^{n+1}ba^db=1$, we have that $a^db\in H_1$ in $M_n/\rho$.
Again by $a^{n+1}ba^db=1$, then $a^{n+1}b\in H_1$ and so
$a^{n+1-d}\in H_1$ (in $M_n/\rho$). This yields $a\in H_1$ and thus
$b^2\in H_1$ (in $M_n/\rho$). Therefore $a,b\in H_1$ (in $M_n/\rho$)
and thus $M_n/\rho$ is a group. By Lemma~\ref{lm:first-lemma},
$\rho=M_n\times M_n$.
\end{proof}

\vspace{\baselineskip}

\emph{Base of induction: $\|u\|+\|v\|=1$}

\vspace{\baselineskip}

Without loss we may assume that $\|u\|=1$ and $v=1$. The
possibilities for $u$ are:
\begin{itemize}
\item
$u=b$. Then $b\rho 1$ and so $a^{n+1}\rho 1$ and $a^{n+2}\rho 1$.
Thus $a\rho 1$ and so $\rho=M_n\times M_n$.
\item
$u=a$. Then $a\rho 1$ and so $b^2\rho 1$ and $b^2\rho b$. This
implies $b\rho 1$ and thus $\rho=M_n\times M_n$.
\item
$u=a^db$ with $1\leq d\leq n$. Then by Lemma~\ref{lm:second-lemma},
$\rho=M_n\times M_n$.
\end{itemize}

\vspace{\baselineskip}

\emph{Induction step:
$\bigl(<\|u\|+\|v\|\bigr)\longmapsto\bigl(\|u\|+\|v\|\bigr)$}

\vspace{\baselineskip}

We may assume that $u$ and $v$ are in their normal forms. First let
us sort out the case when one of $u$ and $v$ is the identity. Let,
say, $v=1$. If $u$ starts with $b$, then $b\in H_1$ in $M_n/\rho$.
Then by $a^{n+1}b^2=b$, $a\in H_1$ in $M_n/\rho$. By
Lemma~\ref{lm:first-lemma}, then we have $\rho=M_n\times M_n$. If
$u$ starts with $a$, then three cases can happen:
\begin{itemize}
\item
$u\equiv a^dbU$ for $1\leq d\leq n$. Then $a^db\mathcal{R}1$ in
$M_n/\rho$ and so by Lemma~\ref{lm:second-lemma} $\rho=M_n\times
M_n$.
\item
$u\equiv Ua^mb$ for $m\geq n+1$. Then $ab\rho Ua^mbab=Ua^{m-n-1}$
and so $aba^{n+1}b\rho 1$. Then $ab\mathcal{R}1$ in $M_n/\rho$ and
we are done.
\item
$u\equiv Ua^mba^l$ for $m\geq n+1$ and $l\geq 1$. If $l\geq n$, then
$$a^{n+1}bUa^mba^{l+1}b\rho a^{n+1}bab=1$$ and running the arguments from
the previous case, we derive that $\rho=M_n\times M_n$. If $l\leq
n-1$, then $Ua^mb\rho a^{n+1-l}bab$. This yields
$a^{n+1-l}baba^l\rho Ua^mba^l\rho 1$, and so
$a^{n+1-l}b\mathcal{R}1$ in $M_n/\rho$, thus $\rho=M_n\times M_n$.
\end{itemize}

From now on, in the remainder of the induction step we may assume
that $u\neq 1$ and $v\neq 1$. Now we sort out the case when at least
one of $u$ and $v$ starts with $b$. Essentially there are only three
following cases:
\begin{itemize}
\item
$u\equiv bU$ and $v\equiv bV$. Then $U\not\equiv V$ and so $U\neq V$
(since $U$ and $V$ in their normal forms). But $b\mathcal{L}1$ and
so $U\rho V$. By induction, $\rho=M_n\times M_n$.
\item
$u\equiv b^{\varepsilon}U$ and $v=a^db^kV$ for $1\leq d\leq n$,
$\varepsilon\geq 1$, $k\geq 1$. Then $b^{\varepsilon}U=a^{n+1}b\cdot
b^{\varepsilon}U\rho b^{k-1}V$. Note that $b^{\varepsilon}U$ and
$b^{k-1}V$ are in their normal forms. Thus if
$b^{\varepsilon}U\not\equiv b^{k-1}V$, by induction it follows that
$\rho=M_n\times M_n$. So, assume that $b^{\varepsilon}U\equiv
b^{k-1}V$. Then $k\geq 2$ and so $a^{n+1-d}b^{k-1}V\rho
a^{n+1}b^2\cdot b^{k-2}V=b^{k-1}V$. Now by induction,
$\rho=M_n\times M_n$.
\item
$u\equiv b^{\varepsilon}U$ and $v\equiv a^mbV$ for $\varepsilon\geq
1$ and $m\geq n+1$. Then $b\mathcal{R}1$ in $M_n/\rho$, and so $b\in
H_1$ in $M_n/\rho$. By $a^{n+1}b^2=b$, this gives $a\in H_1$ in
$M_n/\rho$ and so by Lemma~\ref{lm:first-lemma}, $\rho=M_n\times
M_n$.
\end{itemize}

We are left with the situation when both $u$ and $v$ start with $a$.
First we deal with the case when one of $u$ and $v$ starts with a
prefix of the form $a^db$ for some $1\leq d\leq n$. Up to changing
the roles of $u$ and $v$, this splits into the following two cases:
\begin{itemize}
\item
$u\equiv a^{d+r}b^kU$ and $v\equiv a^db^tV$ for $1\leq d,d+r\leq n$,
and $d,r,k,t\geq 1$, and such that $U$ is either empty or starts
with $a$. Premultiplying $u\rho v$ with $a^{n+1}ba^{n-d-r}$, we
obtain $b^{k-1}U\rho b^{t-1}V$. Both $b^{k-1}U$ and $b^{t-1}V$ are
in their normal forms. If $b^{k-1}U\not\equiv b^{t-1}V$, then by
induction we are done. So, assume that $b^{k-1}U\equiv b^{t-1}V$.
Then $a^{d+r}b^kU\rho a^db^kU$. If $k>1$, then by premultiplying the
last $\rho$-equivalence with $a^{n+1-d-r}$, we obtain $b^{k-1}U\rho
a^{n+1-r}b^kU$ and can apply induction. So assume that $k=1$. Then
$a^{n+1}bU\rho a^{n+1-r}bU$ and so in general $a^{n+1+pr}bU\rho
a^{n+1-r}bU$ for all $p\geq 0$. For a sufficiently large $p$,
$a^{n+1+pr}bU\mathcal{R}1$ and so $a^{n+1-r}b\mathcal{R}1$ in
$M_n/\rho$. By Lemma~\ref{lm:second-lemma}, then $\rho=M_n\times
M_n$.
\item
$u\equiv a^db^kU$ and $v\equiv a^mbV$ for $1\leq d\leq n$, $k\geq
1$, $m\geq n+1$; and such that $U$ is either empty or starts with
$a$. If $k\geq 2$, then $b^{k-1}U\rho a^{n+1-d+m}bV$ and so
$b\mathcal{R}1$ in $M_n/\rho$. Then, as above, $a\rho b\rho 1$ and
so $\rho=M_n\times M_n$. If $k=1$, then $a^dbU\rho a^mbV$ and so
$U\rho a^{n+1}ba^mbV$. Hence $a^dba^{n+1}ba^mbV\rho a^mbV$ and so
$a^dba^{n+1}b\rho 1$. Therefore $a^db\mathcal{R}1$ in $M_n/\rho$ and
by Lemma~\ref{lm:second-lemma} we are done.
\end{itemize}

In the remainder of the induction step, both $u$ and $v$ will start
with prefixes of the form $a^mb$ for $m\geq n+1$. If both $u$ and
$v$ end with $a$, we can cancel this distinguished $a$ (since
$a\mathcal{R}1$) and then apply induction. So without loss we will
assume that $u\equiv Ua^mb$ (with $m\geq n+1$). We have two cases to
consider depending on whether $v$ ends with $a$ or not:
\begin{itemize}
\item
$v\equiv Va^pb$ with $p\geq n+1$. Then postmultiplying $Ua^mb\rho
Va^pb$ with $ab$, we obtain $Ua^{m-n-1}\rho Va^{p-n-1}$ and may
apply induction.
\item
$v\equiv Va^l$ with $l\geq 1$. Then $Ua^mb\rho Va^l$ and so, by
postmultiplying with $a^nb$, $Ua^{m-n-1}\rho Va^{l+n}b$. If
$Ua^{m-n-1}\not\equiv Va^{l+n}b$, then we may apply induction. So
assume that $Ua^{m-n-1}\equiv Va^{l+n}b$. Then $m=n+1$ and $u\rho v$
reads as
\begin{equation}\label{eq:something}
Va^{l+n}ba^{n+1}b\rho Va^l.
\end{equation}
If $l\geq n+1$, then $Va^lb\rho Va^{l+n}b^2=Va^{l-1}b$, and since
$Va^lb$ is in its normal form (and regardless whether $Va^{l-1}b$
must be reduced to get its normal form), we may apply induction. So
we will assume that $l\leq n$. If $2\leq l\leq n$, then
postmultiply~\eqref{eq:something} with $a^{n+1-l}$ to obtain
$Va^{n+1}b\rho Va^{l+n}b$ and then use induction. Hence let $l=1$,
then posmultiply~\eqref{eq:something} with $b$: $Vab\rho
Va^{n+1}b^2=Vb$. If $V\equiv 1$, then $ab\rho b$ and so $b^2\rho 1$
and $a\rho ab^2\rho b^2\rho 1$. This yields that $M_n/\rho$ is a
group and so we are done. Thus we may assume that $V$ is non-empty:
$V\equiv V'a^tb$ (with $t\geq n+1$). Then
\begin{equation*}
V'a^{t-n-1}=V'a^tbab\rho V'a^tb^2=V'a^{t-n-1}b.
\end{equation*}
If $t-n-1\geq n+1$, then we may apply induction. So let $t-n-1\leq
n$. If $V'\equiv 1$, then $a^n\rho a^nb$ and so $1\rho a^{n+1}ba^n$.
This implies $a\in H_1$ in $M_n/\rho$, which in turn gives $b\in
H_1$ in $M_n/\rho$. These imply $\rho=M_n\times M_n$. Thus we may
assume that $V'$ is non-empty: $V'=V''a^qb$ (with $q\geq n+1$). Then
$V''a^{q-n-1}\rho V''a^qba^{t-n-1}$. If $t-n-1=0$ we may apply
induction. So assume that $1\leq t-n-1\leq n$ and then by
postmultiplying with $b$ the last $\rho$-equivalence, we obtain
$V''a^{q-n-1}b\rho V''a^{q-n-1}$. Proceeding further in this manner
we eventually will arrive at two distinct $\rho$-equivalent words in
their normal forms and then apply induction.
\end{itemize}
The proof is complete.
\end{proof}

\end{document}